\newcommand{\Real}{\mathbb{R}}
\newcommand{\Natural}{\mathbb{N}}
\newcommand{\bq}{\begin{eqnarray*}}
\newcommand{\eq}{\end{eqnarray*}}
\def\blfootnote{\xdef\@thefnmark{}\@footnotetext}\makeatother
\theoremstyle{break}
\newtheorem{defn}{Definition}[section]
\newtheorem{theorem}[defn]{Theorem}
\newtheorem{lemma}[defn]{Lemma}
\newtheorem{remark}{Remark}[section]
\newenvironment{proof}{\noindent{\textit{Proof:}}}{%
\unskip\nobreak\hfil\penalty50\hskip1em\null\nobreak
$\Box$
\parfillskip=\z@\finalhyphendemerits=0\endgraf\bigskip} 
\let\oldendexmp\endexmp
\def\endexmp{\unskip\nobreak\hfil\penalty50\hskip1em\null\nobreak\hfil%
$\blacksquare$\parfillskip=\z@\finalhyphendemerits=0\endgraf\oldendexmp}
\title{{\Large Integral equations, quasi-Monte Carlo methods and risk modelling}\\
{\large Dedicated to the $80$th anniversary of Ian Sloan}
}
\author{\normalsize M. Preischl, S. Thonhauser and R.F. Tichy\thanks{The authors
are supported by the Austrian Science Fund (FWF) Project F5510 (part of the Special Research
Program (SFB) \textquotedblleft Quasi-Monte Carlo Methods: Theory and Applications\textquotedblright).
}}
\date{}
\newcommand{\Addresses}{{
\bigskip
\footnotesize
\textsc{Institute of Analysis and Number Theory, Graz University of Technology,
Steyrergasse 30/II, 8010 Graz, Austria}\par\nopagebreak
\textit{E-mail addresses}: \texttt{preischl@math.tugraz.at, stefan.thonhauser@math.tugraz.at, tichy@tugraz.at}
}}
\begin{document}
\maketitle
\blfootnote{{\bf Keywords}: Integral equations, quasi-Monte Carlo method, risk theory}
\abstract{
We survey a QMC approach to integral equations and develop some new applications to risk modeling.
In particular, a rigorous error bound derived from Koksma-Hlawka type inequalities is achieved for certain expectations related to the probability of ruin
in Markovian models. The method is based on a new concept of isotropic discrepancy and its applications to numerical integration.
The theoretical results are complemented by numerical examples and computations.
}
\section{Introduction}
During the last two decades quasi-Monte-Carlo methods
(QMC-methods) have been applied to various problems in numerical
analysis, statistical modeling and mathematical finance. In this
paper we will give a brief survey on some of these developments and present new applications to more refined risk models
involving discontinuous processes.
Let us start with Fredholm integral equations of the second kind:

\begin{align}f(\textbf{x})=g(\textbf{x})+\int_{[0,1]^{s}}K(\textbf{x},\textbf{y})f(\textbf{y})d\textbf{y},
\end{align}
where the kernel is given by
$K(\textbf{x},\textbf{y})=k(\textbf{x}-\textbf{y})$ with
$k(\textbf{x})$ having period $1$ in each component of
$\textbf{x}=(x_{1},\ldots, x_{s})$. As it is quite common in
applications of QMC-methods (see for example \cite{dick2007lattice},
\cite{sloan2001tractability}, \cite{kuo2003component}) it is assumed that   
$g$ and $k$ belong to a weighted Korobov space. Of course, there
exists a vast literature concerning the numerical solution of
Fredholm equations, see for instance \cite{ikebe1972galerkin}, \cite{atkinson1967numerical} or \cite{twomey1963numerical}.
In particular, we want to mention the work of I. Sloan in the late 1980's where he
explored various quadrature rules for solving integral equations
and applications to engineering problems (\cite{sloan1989representation}, \cite{sloan1988quadrature} and \cite{kumar1987new}), which have also, after some modifications, been applied to Volterra type integral equations (see \cite{brunner1984iterated} or \cite{brunner1992implicitly}).
In \cite{dick2007lattice} 
the authors approximate $f$ using the Nystr\"om method based on QMC
rules.\\
For points $\textbf{t}_{1},\ldots, \textbf{t}_{N}$ in
$[0,1]^{s}$ the $N$-th approximation of $f$ is given by
\begin{align}
f_{N}(\textbf{x}):=g(\textbf{x})+\frac{1}{N}\sum^{N}_{n=1}K(\textbf{x},\textbf{t}_{n})f_{N}(\textbf{t}_{n}),
\end{align}
where the function values $f_{N}(\textbf{t}_{1}),\ldots ,
f_{N}(\textbf{t}_{N})$ are obtained by solving the linear system
\begin{align}\label{eq:linearsystem}
f_{N}(\textbf{t}_{j})=
g(\textbf{t}_{j})+\frac{1}{N}\sum^{N}_{n=1}K(\textbf{t}_{j},
\textbf{t}_{n})f_{N}(\textbf{t}_{n}),\; j=1,\ldots ,N.
\end{align}
Under some mild conditions on $K, N,$ and the integration points
$\textbf{t}_{1},\ldots , \textbf{t}_{N},$ it is shown in \cite{dick2007lattice}
that there exists a unique solution of (\ref{eq:linearsystem}). Furthermore, the
authors analyze the worst case error of this, so-called
QMC-Nystr\"om method. In addition, good lattice point sets
$\textbf{t}_{1},\ldots , \textbf{t}_{N}$ are presented, which lead
to a best possible worst case error. A special focus of this
important paper lies on the study of tractability and strong
tractability of the QMC-Nystr\"om method. For tractability theory in
general we refer to the fundamental monograph of
\cite{novak2010tractability}.
Using ideas of E. Hlawka \cite{hlawka1961funktionen} the third
author of the present paper worked on iterative methods for solving Fredholm and Volterra equations,
see also Hua-Wang \cite{hua2012applications}.\\

The idea is to approximate the solution of integral equations by means of iterated (i.e.
multi-dimensional) integrals. The convergence of this procedure
follows from Banach's fixed point theorem and error estimates can
be established following the proof of the Picard-Lindel\"of
approximation for ordinary differential equations. To be more
precise, let us consider integration points
$\textbf{t}_{1},\ldots , \textbf{t}_{N}\in [0,1]^{s}$ with star discrepancy $D^*_{N}$
defined as usual by
\begin{align}
D^*_{N}=\sup_{J\subset[0,1]^{s}}\left|\frac{1}{N}\sharp\{n\leq
N:\textbf{t}_{n}\in J\}-\lambda(J)\right|,
\end{align}
where the supremum is taken over all axis-aligned boxes $J$ with one vertex in the origin and Lebesgue measure $\lambda(J)$.
In \cite{Tichy1984_Integral} the following system of $r$ 
integral equations has been considered for given functions $g_j$ on $[0,1]^{s+r}$ and $h_j$ on $[0,1]^s$:
\begin{align}\label{eq:system}
f_{j}(\textbf{x})= \int^{x_{1}}_{0}\ldots
\int^{x_{s}}_{0} g_{j}(\xi_{1},\ldots , \xi_{s},f_{1}(\boldsymbol{\xi}),\ldots,f_{r}(\boldsymbol{\xi}))d\xi_{s}\ldots d \xi_{1}+ h_{j}(\textbf{x})\,,\;
j=1,\ldots,r
\end{align}
where we have used the notations $\textbf{x}=(x_{1},\ldots ,x_{s})\in [0,1]^{s}$ and $\boldsymbol{\xi}=(\xi_1,\ldots,\xi_s)$.
Furthermore, we assume that the partial derivatives up to order $s$ of the functions
$g_{j}$ and $h_{j}$, $j=1,\ldots,r$, are bounded by some constants $G$ and $H$, respectively.
Then,
for a given point set $\textbf{t}_{1},\ldots , \textbf{t}_{N}$ in
$[0,1]^{s}$ with discrepancy $D^*_{N}$,
the solution $\textbf{f}=(f_{1},\ldots,f_{r})$ of the system (\ref{eq:system}) can be approximated by
the quantities
$\textbf{f}^{(k)}=(f^{(k)}_{1},\ldots,f^{(k)}_{r}),$
given recursively by
\begin{align}
f^{(k+1)}_{j}(\textbf{x})=\frac{x_{1}\cdots x_{s}}{N}
\sum^{N}_{n=1}g_{j}(x_1 t_{1,n},\ldots,x_s t_{s,n},f_{1}^{(k)}(\textbf{x}\cdot
\textbf{t}_{n}),\ldots,f^{(k)}_{r}(\textbf{x}\cdot\textbf{t}_{n}));
\end{align}
here $\textbf{x}\cdot\textbf{t}_{n}$ stands for the inner product
$x_{1}t_{1,n}+\ldots + x_{s}t_{s,n}$, where $\textbf{t}_{n}=
(t_{1,n},\ldots,t_{s,n})$.
In \cite{Tichy1984_Integral} it is shown, that based on     
the classical Koksma-Hlawka inequality the worst case error, i.e.,
$\parallel \textbf{f}^{(k)}- \textbf{f}\parallel_{\infty}$
(sum of componentwise supremum norms) can be estimated in terms of
the bounds $G$ and $H$ and the discrepancy $D^*_{N}$ of the integration points.
This method was also extended to integral equations with singularities, such as
Abel's integral equation. The main focus of the
present paper lies on applications in mathematical finance. In Albrecher \& Kainhofer \cite{AlbKain02}
the above method was used for the numerical solution of certain
Cram\'er-Lundberg models in risk theory. However, it turned out that
in these models certain discontinuities occur. This means, that
one cannot assume bounds for the involved partial derivatives and
simply apply the classical Koksma-Hlawka inequality. Moreover, the
involved functions are indicator functions of simplices thus not
of bounded variation in the sense of Hardy and Krause, see Drmota \& Tichy \cite{drmota1997sequences} and Kuipers
\& Niederreiter \cite{kuipers2012uniform}.

Albrecher \& Kainhofer \cite{AlbKain02} considered a risk model with non-linear dividend
barrier and made some assumptions to overcome the difficulties
caused by discontinuities. For such applications it could help to
use a different notion of variation for multivariate functions.
G\"otz \cite{gotz2002discrepancy} proved a version of the Koksma-Hlawka inequality for
general measures, Aistleitner \& Dick \cite{AistDick2015} considered functions
of bounded variation with respect to signed measures and
Brandolini et al. \cite{Brandolini2013a,Brandolini2013b}
replaced the integration domain $[0,1]^{s}$ by an arbitrary
bounded Borel subset of $\mathbb{R}^{s}$ and proved the inequality
for piecewise smooth integrands. Based on fundamental work of
Harman \cite{Harman2010}, a new concept of variation was developed for a wide
class of functions, see Pausinger \& Svane \cite{PausingerSvane2015} and Aistleitner et al. \cite{AistPausSvanTich2016}.\\
In the following we give a brief overview on concepts of
multivariate variation and how they can be applied for error
estimates in numerical integration. Let $f(\textbf{x})$ be a
function on $[0,1]^{s}$ and $\textbf{a}= (a_{1},\ldots ,
a_{s})\leq\textbf{b}= (b_{1},\ldots , b_{s})$ points
in $[0,1]^{s}$, where $\leq$ denotes the natural componenwise partial
order. Following the notation of Owen \cite{Owen2005} and Aistleitner et al. \cite{AistPausSvanTich2016} for a subset
$u\subseteq\{1,\ldots,s\}$ we denote by
$\textbf{a}^{u}:\textbf{b}^{-u}$ the point with $i$th coordinate
equal to $a_{i}$ if $i\in u$ and equal to $b_{i}$ otherwise. Then
for the box $R=[\textbf{a},\textbf{b}]$ we introduce the
$s-$dimensional difference operator
$$\Delta^{(d)}(f;R)=\Delta(f;R)=\sum_u(-1)^{\vert u
\vert}f(\textbf{a}^{u}:\textbf{b}^{-u}),$$
where the summation is extended over all subsets
$u\in\{1,\ldots,s\}$ with cardinality $\vert u \vert$ and complement
$-u$. Next we define partitions of $[0,1]^{s}$ as they are
used in the theory of multivariate Riemann integrals, which we call
here \emph{ladder}. A ladder $\mathcal{Y}$
in $[0,1]^{s}$ is the cartesian product of one-dimensional
partitions $0=y_{1}^{j}<\ldots <y_{k_{j}}^{j}<1$ (in any dimension
$j=1,\ldots,s$). Define the successor $(y_{i}^{j})_+$ of
$y_{i}^{j}$ to be $y_{i+1}^{j}$ if $i<k_{j}$ and
$(y_{k_{j}}^{j})_+=1$. For $\textbf{y}=
(y_{i_{1}}^{1},\ldots,y_{i_{s}}^{s})\in\mathcal{Y}$ we define the
successor $\textbf{y}_{+}=
((y_{i_{1}}^{1})_{+},\ldots,(y_{i_{s}}^{s})_{+})$ and have

$$\Delta(f;[0,1]^{s})=\sum_{{\textbf{y}}\in\mathcal{Y}}\Delta(f;[\textbf{y},\textbf{y}_{+}]).$$

Using the notation

$$V_{\mathcal{Y}}(f;[0,1]^{s})=\sum_{{\textbf{y}}\in\mathcal{Y}}\Delta(f;[\textbf{y},\textbf{y}_{+}])$$

the Vitali variation of $f$ over $[0,1]^{s}$ is defined by
\begin{align}\label{eq:Vitali}
V(f;[0,1]^{s})=\sup_{\mathcal{Y}}V_{\mathcal{Y}}(f;[0,1]^{s}).
\end{align}
Given a subset $u\subseteq \{1,\ldots,s\},$ let

$$\Delta_{u}(f;[\textbf{a},\textbf{b}])=\sum_{v\subseteq u}(-1)^{\vert v\vert}f(\textbf{a}^{v}:\textbf{b}^{-v})$$

and set $\textbf{0}=(0,\ldots,0), \textbf{1}=
(1,\ldots, 1)\in [0,1]^{s}$. For a ladder $\mathcal{Y}$ there is a
corresponding ladder $\mathcal{Y}_{u}$ on the $\vert u\vert$-dimensional face of $[0,1]^{s}$
consisting of points of the
form $\textbf{x}^{u}:\textbf{1}^{-u}$. Clearly,

$$\Delta_{u}(f; [0,1]^{s})=\sum_{{\textbf{y}}\in\mathcal{Y}_{u}}\Delta_{u}(f;[\textbf{y},\textbf{y}_{+}]).$$

Using the notation

$$V_{\mathcal{Y}_u}(f;[0,1]^{s})=\sum_{{\textbf{y}}\in\mathcal{Y}_{u}}\Delta_{u}(f;[\textbf{y},\textbf{y}_{+}])$$

for the variation over the ladder $\mathcal{Y}_{u}$ of the
restriction of $f$ to the face of $[0,1]^{s}$ specified by $u$,
the Hardy-Krause variation is defined as

$$\mathcal{V}(f)=\mathcal{V}_{HK}(f;[0,1]^{s})=\sum_{\emptyset\neq u\subseteq\{1,\ldots,s\}}\sup_{\mathcal{Y}_{u}}V_{\mathcal{Y}_u}(f;[0,1]^{s}).$$

Assuming that $f$ is of bounded Hardy-Krause variation, the
classical Koksma-Hlawka inequality reads as follows:
\begin{align}\label{eq:KH}
\left\vert\frac{1}{N}\sum_{n=1}^{N}f(\textbf{x}_{n})-\int_{[0,1]^s} f(\textbf{x})d\textbf{x}\right\vert\leq\mathcal{V}(f)D^*_{N},
\end{align}
where $\textbf{x}_{1},\ldots,\textbf{x}_{N}$ is a finite point set
in $[0,1]^{s}$ with star discrepancy $D^*_{N}$.
In the case $f:[0,1]^s\to\Real$ has continuous mixed partial derivatives up to order $s$ the Vitali variation (\ref{eq:Vitali}) is given by
\begin{align}\label{eq:varder}
\mathcal{V}(f;[0,1]^s)=\int_{[0,1]^s}\left\vert\frac{\partial^s f}{\partial x_1\cdots\partial x_s}(\textbf{x})\right\vert d\textbf{x}.
\end{align}
Summing over all non-empty subsets $u\subseteq[0,1]^s$ immediately yields an explicit formula for the Hardy-Krause variation in terms of
intergrals of partial derivatives, see Leobacher \& Pillichshammer \cite[Ch.3, p. 59]{LeobacherPillichshammer2014}. In particluar,
the Hardy-Krause variation can be estimated from above by an absolute constant if we
know global bounds on all partial derivatives up to order $s$.\\
\\
In the remaining part of the introduction we briefly sketch a more general concept of
multidimensional variation which was recently developed in \cite{PausingerSvane2015}.
Let $\mathcal{D}$ denote an arbitrary family of measurable subsets
of $[0,1]^{s}$ which contains the empty set $\emptyset$ and
$[0,1]^{s}$. Let $\mathcal{L}(\mathcal{D})$ denote the
$\Real-$vectorspace generated by the system of indicator
functions $\mathds{1}_{A}$ with $A\in\mathcal{D}$.\\
\\
A set
$A\subseteq[0,1]^{s}$ is called an algebraic sum of sets in
$\mathcal{D}$ if there exist $A_{1},\ldots , A_{m}\in \mathcal{D}$
such that
$$\mathds{1}_{A}=\sum_{i=1}^{n}\mathds{1}_{A_{i}}-\sum_{i=n+1}^{m}\mathds{1}_{A_{i}},$$

and $\mathcal{A}$ is defined to be the collection of algebraic sums
of sets in $\mathcal{D}$. As in \cite{PausingerSvane2015} we define the Harman complexity
$h(A)$ of a non-empty set $A\in\mathcal{A}, A \neq [0,1]^{s}$ as
the minimal number $m$ such there exist $A_{1},\ldots , A_{m}$
with

$$\mathds{1}_{A}=\sum_{i=1}^{n}\mathds{1}_{A_{i}}-\sum_{i=n+1}^{m}\mathds{1}_{A_{i}},$$

for some $1\leq n \leq m$ and $A_{i}\in\mathcal{D}$ or
$[0,1]^{s}\setminus A_{i}\in\mathcal{D}$. Moreover, set
$h([0,1]^{s})= h(\emptyset)= 0$ and for $f\in\mathcal{L}(\mathcal{D})$

$$V^{\ast}_{\mathcal{D}}(f)=\inf\left\{\;\sum_{i=1}^{m}\vert \alpha_{i}\vert h_{\mathcal{D}(A_{i})}:
f=\sum_{i=1}^{m}\alpha_{i}{\mathds{1}}_{A_{i}},\;
\alpha_{i}\in\mathbb{R},\; A_{i}\in\mathcal{D}\;\right\}.$$

Furthermore, let $\mathcal{V}_{\infty}(\mathcal{D})$ denote the
collection of all measurable, real-valued functions on $[0,1]^{s}$
which can be uniformly approximated by functions in
$\mathcal{L}(\mathcal{D}).$ Then the $\mathcal{D}-$variation of
$f\in\mathcal{V}_{\infty}(\mathcal{D})$ is defined by
\begin{equation}\label{variation}
 V_{\mathcal{D}}(f)=\inf\{\;\liminf_{i\rightarrow\infty}V_{\mathcal{D}}^{\ast}(f_{i}):
\;f_{i}\in\mathcal{L}(\mathcal{D}),\;f=\lim_{i\rightarrow\infty}f_{i}\;\},
\end{equation}

and set $V_{\mathcal{D}}(f)=\infty$ if $f\notin
\mathcal{V_{\infty}}\mathcal{(D)}.$ The space of functions of
bounded $\mathcal{D}-$variation is denoted by
$\mathcal{V}(\mathcal{D})$. Important classes of sets
$\mathcal{D}$ are the class $\mathcal{K}$ of convex sets and the
class $\mathcal{R}^{\ast}$ of axis aligned boxes containing
$\bf{0}$ as a vertex. In Aistleitner et al. \cite{AistPausSvanTich2016} it is shown that the
Hardy-Krause variation $\mathcal{V}(f)$ coincides with
$\mathcal{V}_{\mathcal{R}^{\ast}}(f)$. For various applications
the $\mathcal{D}-$variation seems to be a more natural and
suitable concept. A convincing example concerning an application to
computational geometry is due to Pausinger \& Edelsbrunner \cite{edelsbrunner2016approximation}.
Pauisnger \& Svane \cite{PausingerSvane2015} considered the variation $\mathcal{V}_{\mathcal{K}}(f)$ with respect to the class of convex sets.
They proved the following version of the Koksma-Hlawka inequality:
\begin{align*}
\left\vert\frac{1}{N}\sum_{n=1}^{N}f(\textbf{x}_{n})-\int_{[0,1]^s} f(\textbf{x})d\textbf{x}\right
\vert\leq \mathcal{V}_{\mathcal{K}}(f)\tilde{D}_{N},
\end{align*}
where $\tilde{D}_{N}$ is the isotropic discrepancy of the point set $\textbf{x}_{1},\ldots,\textbf{x}_{N}$, which is defined
as follows
\begin{align*}
\tilde{D}_{N}=\sup_{C\subset\mathcal{K}}\left|\frac{1}{N}\sharp\{n\leq
N:\textbf{x}_{n}\in C\}-\lambda(C)\right|.
\end{align*}
Pausinger \& Svane \cite{PausingerSvane2015} have shown that twice continuously
differentiable functions $f$ admit finite $\mathcal{V}_{\mathcal{K}}(f)$,
and in addition they gave a bound which will be usefull in our context.\\
\\
Our paper is structured as follows. In Section 2 we introduce specific Markovian models in risk theory where in a natural
way integral equations occur. These equations are based on arguments from renewal theory
and only in particular cases they can be solved analytically. In Section 3 we develop a QMC method for such equations. We give an 
error estimates based on Koksma-Hlawka type inequalities for such models.
In Section 4 we compare our numerical results to exact solutions in specific instances.


\section{Discounted penalties in the renewal risk model}
\subsection{Stochastic modeling of risks}
In the following we assume a stochastic basis $(\Omega,\,\mathcal{F},\,P)$
which is large enough to carry all the subsequently defined random variables.
In risk theory the surplus process of an insurance portfolio is modeled by a stochastic process
$X=(X_t)_{t\geq 0}$. In the classical risk model, going back to Lundberg \cite{Lundberg1903}, $X$ takes the form
\begin{align}\label{eq:CLmodel}
X_t=x+c\,t-\sum_{i=1}^{N_t}Y_i,
\end{align}
where the deterministic quantities $x\geq 0$ and $c\geq 0$ represent the initial capital and the premium rate.
The stochastic ingredient $S_t=\sum_{i=1}^{N_t}Y_i$ is the cumulated claims process which is a compound Poisson process.
The jump heights - or claim amounts - are $\{Y_i\}_{i\in\mathbb{N}}$ for which $Y_i\stackrel{iid}{\sim} F_Y$ with $F_Y(0)=0$.
The counting process $N=(N_t)_{t\geq 0}$ is a homogeneous Poisson process with intensity $\lambda>0$.
A crucial assumption in the classical model is the independence between $\{Y_i\}_{i\in\mathbb{N}}$ and $N$.
A major topic in risk theory is the study of the ruin event.
We introduce the time of ruin $\tau=\inf\{t\geq 0\,\vert\, X_t<0\}$, i.e.,
the first point in time at which the surplus becomes negative.
In this setting $\tau$ is a stopping time with respect to
the filtration generated by $X$, $\{\mathcal{F}_t^X\}_{t\geq 0}$ with $\mathcal{F}_t^X=\sigma\{X_s\,\vert\,0\leq s\leq t\}$.
A first approach for quantifying the risk of $X$, is the study of the associated ruin probability
\begin{align*}
\psi(x)=P_x(X_t<0\;\mbox{for some}\,t\geq 0)=P_x(\tau<\infty),
\end{align*}
which is non-degenerate if $\mathbb{E}_x(X_1)>0$, and satisfies the integral equation
\begin{align*}
\frac{c}{\lambda}\psi(x)=\int_x^\infty 1-F_Y(y)dy+\int_0^x\psi(x-y)(1-F_Y(y))dy.
\end{align*}
In Gerber \& Shiu \cite{GerShiuTOR98,Gerber_Shiu05} so-called discounted penalty functions are introduced.
This concept allows for an integral ruin evaluation and is based on a function
$w:\mathbb{R}^+\times\mathbb{R}^+\to\mathbb{R}$ which links the deficit at ruin $\vert X_\tau \vert$ and the surplus prior to ruin $X_{\tau-}:=\lim_{t\nearrow\tau}X_t$
via the function
\begin{align*}
V(x)=\mathbb{E}_x\left(e^{-\delta \tau} w(\vert X_\tau\vert,X_{\tau-})\,\mathds{1}_{\{\tau<\infty\}}\right).
\end{align*}
The time of ruin $\tau$ is included by means of a discounting factor $\delta>0$ which gives more weight to an early ruin event.
In this setting specific choices of $w$ allow for an unified treatment of ruin related quantities.
\begin{remark}
When putting a focus on the study of $\psi(x)$, the condition $\mathbb{E}_x(X_1)>0$ is crucial.
It says that on average premiums exceed claim payments in one unit of time.
Standard results, see Asmussen \& Albrecher \cite{AsAlb2010}, show that under this condition $\lim_{t\to\infty}X_t=+\infty$ $P$-a.s.
From an economic perspective the accumulation of
an infinte surplus is unrealistic and risk models including shareholder participation via dividends are introduced in the literature.
We refer to \cite{AsAlb2010} for model extensions in this direction.
\end{remark}

\subsection{Markovian risk model}\label{markov_risk}
In the following we consider an insurance surplus process $X=(X_t)_{t\geq 0}$ of the form
\begin{align*}
X_t=x+\int_0^t c(X_{s-})ds-\sum_{i=1}^{N_t}Y_i. 
\end{align*}
The quantity $x\geq 0$ is called the initial capital, the cumulated claims are represented by $S_t=\sum_{i=1}^{N_t}Y_i$
and the state-dependend premium rate is $c(\cdot)$. The cumulated claims process $S=(S_t)_{t\geq 0}$ is given by a
sequence $\{Y_i\}_{i\in\Natural}$ of positive, independently and identically distributed (iid) random variables and a counting process $N=(N_t)_{t\geq 0}$.
For convenience we assume that the claims distribution admits a continuous density $f_Y:\Real^+\to\Real^+$.
In our setup we model the claim counting process $N=(N_t)_{t\geq 0}$ as a renewal counting process which is specified by the
inter-jump times $\{W_i\}_{i\in\mathbb{N}}$ which are positive and iid random variables. Then, the time of the $i-$th jump
is $T_i=W_1+\ldots+W_i$ and if we assume that $W_1$ admits a density $f_W$,
the jump intensity of the process $X$ is $\lambda(t')=\frac{f_W(t')}{1-\int_0^{t'} f_W(s)ds}$. Here $t'$ denotes the \emph{time since the last jump}.
A common assumption we are going to adopt, is the independence between $\{Y_i\}_{i\in\Natural}$ and $\{W_i\}_{i\in\mathbb{N}}$.\\
We choose, in contrast to classical models, a non-constant premium rate to model the effect of a so-called dividend barrier $a>0$ in a smooth way.
A barrier at level $a>0$ has the purpose that every excess of surplus of this level is distributed as a dividend to shareholders which allows to include
economic considerations in insurance modeling. Mathematically, this means that the process $X$ is reflected at level $a$.
Now instead of directly reflecting the process we use the following construction. Fix $\varepsilon>0$ and for some $\tilde{c}>0$, define
\begin{align}
c(x)=\left\{
\begin{array}{ll}
\tilde{c}, & x\in[0,a-\varepsilon),\\
f(x), & x\in[a-\varepsilon,a],\\
0,& x>a,
\end{array}\right. 
\end{align}
with some positive and twice continuously differentiable function $f$ which fulfills $f(a-\varepsilon)=\tilde{c},\,f(a)=0,\,f'(a-\varepsilon)=f'(a)=f''(a-\varepsilon)=f''(a)=0$.
Altogether, we assume $c(\cdot)\in\mathcal{C}^2[0,a]$ with some Lipschitz constant $L>0$ and
$c'(a-\varepsilon)=c'(a)=0$, $c''(a-\varepsilon)=c''(a)=0$, $c'\leq 0$ and bounded derivatives $c'$, $c''$.
Then $\lim_{x\nearrow a}c(x)=0$ and the process always stays below level $a$ if started in $[0,a)$.\\
A concrete choice for $f$ would be
\begin{align}\label{eq:polynomialchoice}
\frac{c (a-x)^3 \left(15 \varepsilon  (x-a)+6 (a-x)^2+10 \varepsilon^2\right)}{\varepsilon^5}.
\end{align}
In the following we do not specify $f$ any further.
\\
In this setting we add $X_0=x$ into the definition of the time of ruin, i.e., $\tau_x=\inf\{t\geq 0\,\vert\, X_t<0,\,X_0=x\}$.
\begin{remark}
In this model setting ruin can only take place at some jump time $T_k$ and since the process is bounded a.s. we have that $P_x(\tau_x<\infty)=1$.
If an approximation to classical reflection of the process at level $a$ is implemented, then the process virtually started above $a$ is forced to jump
down to $a-\varepsilon$ and continue from this starting value. Consequently, we put the focus on starting values $x\in[0,a)$.
\end{remark}
In the remainder of this section we will study analytic properties of the discounted value function which in this framework takes the form
\begin{align}\label{eq:discpen}
V(x)=\mathbb{E}_x\left(e^{-\delta \tau_x} w(\vert X_\tau\vert,X_{\tau-})\right),
\end{align}
with $\delta>0$ and a continuous penalty function $w:\mathbb{R}^+\times[0,a)\to\mathbb{R}$.\\
To have a well defined function, typically the following integrability condition is used
\begin{align*}
\int_0^\infty\int_0^\infty w(x,y)f_Y(x+y)dy\,dx<\infty,
\end{align*}
see \cite{AsAlb2010}. Since our process is kept below level $a$ and $w$ is supposed to be continuous in both arguments we can naturally replace
the above condition by
\begin{align}\label{eq:intcond}
\sup_{z\in[0,a)}\int_0^\infty \vert w(|z-y|,z)\vert f_Y(y)dy=:M<\infty,
\end{align}
which we will assume in the following. The condition from equation (\ref{eq:intcond}) holds true for example,
if $\vert w(x,y)\vert\leq (1+\vert x\vert+\vert y\vert)^p$ and $F_Y$ admits a finite $p$-th moment for some $p\geq 1$.\\
\begin{remark}
From the construction of $X$ we have that $\tilde{X}=(\tilde{X}_t)_{t\geq 0}$ with $\tilde{X}_t=(X_t,t'(t),t)$ is a piecewise-deterministic Markov process,
see Davis \cite{DavisMarkovOpt93}. Since the jump intensity depends on $t'=t-T_{N_t}$, one needs this additional component for the \emph{Markovization} of $X$.
But on the discrete time skeleton $\{T_i\}_{i\in\Natural}$ with $T_0=0$ the process $X=\{X_{T_k}\}_{k\in\Natural}$ has the Markov property.
\end{remark}

\subsection{Analytic properties and a fixed point problem}
We start with showing some elementary analytical properties of the function $V$ defined in (\ref{eq:discpen}).
\begin{theorem}
The function $V:[0,a)\to\Real$ is bounded and continuous.
\end{theorem}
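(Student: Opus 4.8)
The plan is to represent $V$ explicitly via a one-step analysis over the first inter-jump time $W_1$ and claim $Y_1$, and then read off boundedness and continuity from this representation together with the integrability condition (\ref{eq:intcond}). Since the process $X$ started in $[0,a)$ is bounded and purely deterministic between jumps, the first jump occurs at time $W_1$, at which the surplus has the value $\phi(x,W_1)$, where $\phi(x,t)$ is the (unique, $\mathcal{C}^1$) solution of the ODE $\dot y = c(y)$ with $y(0)=x$; note $\phi(x,t)\in[0,a)$ for all $t\geq 0$ and $x\in[0,a)$, and $\phi$ is continuous (indeed Lipschitz in $x$, uniformly in $t$, by Grönwall and the Lipschitz property of $c$). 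At the first jump the surplus drops by $Y_1$: ruin occurs at $T_1=W_1$ iff $Y_1>\phi(x,W_1)$, in which case $|X_\tau|=Y_1-\phi(x,W_1)$ and $X_{\tau-}=\phi(x,W_1)$; otherwise the process restarts at $\phi(x,W_1)-Y_1\in[0,a)$ with the clock reset, so by the strong Markov property on the jump skeleton (cf. the last remark of Section~2) the continuation contributes $e^{-\delta W_1}V(\phi(x,W_1)-Y_1)$. Writing $f_W$ for the density of $W_1$ and $f_Y$ for that of $Y_1$, this gives the fixed point equation
\begin{align}\label{eq:Vfixedpoint}
V(x)=\int_0^\infty e^{-\delta t}f_W(t)\left(\int_{\phi(x,t)}^\infty w\bigl(y-\phi(x,t),\phi(x,t)\bigr)f_Y(y)\,dy+\int_0^{\phi(x,t)}V\bigl(\phi(x,t)-y\bigr)f_Y(y)\,dy\right)dt.
\end{align}

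For boundedness I would argue directly on (\ref{eq:discpen}): from (\ref{eq:intcond}), for every realization $|w(|X_\tau|,X_{\tau-})|\leq$ (something integrable), but more cleanly, taking $\sup$ in (\ref{eq:Vfixedpoint}) and using $\delta>0$ together with $\mathbb{E}(e^{-\delta W_1})=:\rho<1$ (since $W_1>0$ a.s.), one gets $\|V\|_\infty\leq \rho\,(M+\|V\|_\infty)$, where $M$ is the constant from (\ref{eq:intcond}) and the first inner integral is bounded by $M$ uniformly in the argument $\phi(x,t)\in[0,a)$. Hence $\|V\|_\infty\leq \rho M/(1-\rho)<\infty$. (One should first check $V$ is well defined, i.e. the expectation in (\ref{eq:discpen}) is absolutely convergent; the same estimate, applied to $\mathbb{E}_x(e^{-\delta\tau_x}|w(\dots)|)$ and iterated over successive jumps, yields a geometric bound $\sum_k \rho^{k+1}M<\infty$.)

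For continuity I would work from (\ref{eq:Vfixedpoint}) via a contraction/uniform-convergence argument. Define the operator $T$ on $C_b([0,a))$ by the right-hand side of (\ref{eq:Vfixedpoint}); the computation above shows $T$ maps bounded functions to bounded functions and is a contraction with modulus $\rho<1$ in sup-norm, so $V$ is its unique fixed point and is the uniform limit of $T^n\mathbf{0}$. It therefore suffices to show $T$ maps $C_b([0,a))$ into itself, and then continuity of $V$ follows from uniform convergence. Fix $u\in C_b([0,a))$; for the inner integrand, the map $x\mapsto\phi(x,t)$ is continuous uniformly in $t$, $w$ is continuous, $u$ is continuous, and the claim density $f_Y$ is continuous, so by dominated convergence (dominating function $e^{-\delta t}f_W(t)(M+\|u\|_\infty)$, which is integrable since $\delta>0$ and $M<\infty$) the whole expression is continuous in $x$. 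The main obstacle is the interchange of limit and integral at the point where $\phi(x,t)$ appears as the lower limit of the first inner integral: one must ensure no mass concentrates at the moving endpoint, which is where continuity of $f_Y$ (rather than mere integrability) is used — it makes $z\mapsto\int_z^\infty w(y-z,z)f_Y(y)\,dy$ continuous on $[0,a)$. Apart from that, everything is a routine application of dominated convergence and the Banach fixed point theorem, so the only real care needed is bookkeeping the domination and confirming $\phi$ keeps the arguments inside $[0,a)$.
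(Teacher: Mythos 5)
Your proof is essentially correct, but it takes a genuinely different route from the paper's. The paper proves continuity directly on the probabilistic representation \eqref{eq:discpen}: for $x>y$ it couples the two paths, splits $\vert V(x)-V(y)\vert$ over the events $\{\tau_x=\tau_y\}$ and $\{\tau_x>\tau_y\}$, controls the first part with the flow estimate $\vert X_t^x-X_t^y\vert\leq e^{Lt}\vert x-y\vert$ (Gr\"onwall plus a truncation at a finite horizon $T$ and $P(\tau_x<\infty)=1$), and bounds the second by $2M\sup_{q\in[0,a)}\{F_Y(x-y+q)-F_Y(q)\}$, which vanishes by continuity of $F_Y$. You instead derive the renewal fixed-point equation and run a Banach fixed-point argument on $\mathcal{C}_b([0,a))$ --- which is precisely the content of the paper's \emph{subsequent} lemma on the operator $\mathcal{A}$ of \eqref{eq:DefOperator} --- and deduce continuity of $V$ as a uniform limit of continuous iterates. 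Each approach buys something: yours is shorter and simultaneously yields boundedness, uniqueness and a constructive approximation scheme; the paper's pathwise argument does not rely on the contraction modulus $\mathbb{E}(e^{-\delta W_1})<1$ and therefore survives (in spirit) the case $\delta=0$, which the paper needs later, whereas your argument is tied to $\delta>0$. Two steps in your write-up deserve explicit attention. First, to conclude that $V$ \emph{is} the unique continuous fixed point you must either show that the iterates $T^n\mathbf{0}$ converge to $V$ pointwise (via the probabilistic interpretation $T^n\mathbf{0}(x)=\mathbb{E}_x(e^{-\delta\tau_x}w(\cdots)\mathds{1}_{\{\tau_x\leq T_n\}})$ and $P(\tau_x<\infty)=1$), or note that $T$ is equally a contraction on the complete space of bounded measurable functions, where $V$ is a fixed point by the Markov property, so it must coincide with the continuous one; as written you assert the identification without quite closing it. Second, continuity of $z\mapsto\int_z^\infty w(y-z,z)f_Y(y)\,dy$ does not follow from continuity of $f_Y$ alone: after substituting $u=y-z$ you need a dominating integrable bound for $\vert w(u,z')\vert f_Y(u+z')$ locally uniformly in $z'$, which condition \eqref{eq:intcond} by itself does not supply (it is available under the polynomial-growth/moment condition the paper mentions). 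Both points are repairable in a line or two and are at the same level of rigor as the paper's own treatment.
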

\begin{proof}
The boundedness of $V$ follows directly from the assumption made in (\ref{eq:intcond}).\\
For proving continuity we split off the expectation defining $V$ into two parts which we separately deal with.
Let $x>y$ and observe
\begin{align*}
\left\vert\,V(x)-V(y)\,\right\vert=&
\left\vert\,\mathbb{E}\left[e^{-\delta\tau_x}w(\vert X_{\tau_x}^x\vert,X_{\tau_x-}^x)-e^{-\delta \tau_y}w(\vert X_{\tau_y}^y\vert,X_{\tau_y-}^y)\right]\right\vert
\\
\leq& \mathbb{E}\left[e^{-\delta\tau_x}\left\vert w(\vert X_{\tau_x}^x\vert,X_{\tau_x-}^x)-w(\vert X_{\tau_x}^y\vert,X_{\tau_x-}^y)\right\vert \mathds{1}_{\{\tau_x=\tau_y\}}\right]\\
&+\mathbb{E}\left[\left\vert e^{-\delta\tau_x} w(\vert X_{\tau_x}^x\vert,X_{\tau_x-}^x)-e^{-\delta\tau_y}w(\vert X_{\tau_y}^y\vert,X_{\tau_y-}^y)\right\vert \mathds{1}_{\{\tau_x>\tau_y\}}\right]\\
=&A+B.
\end{align*}
For $A$ we fix some $T>0$ and notice the following bound
\begin{align}\label{eq:estA}
A\leq \mathbb{E}\left[e^{-\delta\tau_x}\vert w(\vert X_{\tau_x}^x\vert,X_{\tau_x-}^x)-w(\vert X_{\tau_x}^y\vert,X_{\tau_x-}^y)\vert \mathds{1}_{\{\tau_x=\tau_y\leq T\}}\right]
+2M\,P(\tau_x>T)\leq 2M.
\end{align}
Before going on we need some estimates on the difference of two paths, one starting in $x$ and the other in $y$.
For fixed $\omega\in\Omega$ we have that on $(0,T_1(\omega)$ the surplus fulfills $\frac{\partial X_t(\omega)}{\partial t}=c(X_t(\omega))$
with initial condition $X_0=0$, $T_1(\omega)$ is finite with probability one. Standard arguments on ordinary differential equations, see for instance
Stoer \& Bulirsch \cite[Th. 7.1.1 - 7.1.8]{StoerBulirsch2000}, yield that an appropriate solution exists and is continuously differentiable in $t$
and continuous in the initial value $x$. We even get the bound $\vert X_t^x-X_t^y\vert \leq e^{L\,t}\,\vert x-y\vert$ for fixed $\omega$,
where $X_t^x$ denotes the path which starts in $x$ and $L>0$ the Lipschitz constant of $c(\cdot)$. 
From these results we directly obtain for a given path
\begin{align*}
\vert X_{T_1-}^x-X_{T_1-}^y\vert\leq e^{L {T_1}}\vert x-y\vert,
\end{align*}
which by iteration results in
\begin{align*}
\vert X_{T_n-}^x-X_{T_n-}^y\vert=\vert X_{T_n}^x-X_{T_n}^y\vert\leq e^{L {T_n}}\vert x-y\vert,
\end{align*}
because $\vert X_{T_n}^x-X_{T_n}^y\vert=\vert X_{T_n-}^x-Y_n-(X_{T_n-}^y-Y_n)\vert=\vert X_{T_n-}^x-X_{T_n-}^y\vert$.
Since ruin takes place at some claim occurrence time $T_k$ we get that on
$\{\omega\in\Omega\,\vert\,\tau_x=\tau_y\leq T\}$ the quantities $\vert X_{\tau_x}^x\vert$ and $X_{\tau_x-}^x$
converge to the corresponding quantities started in $y$, all possible differences are bounded by $e^{LT}\vert x-y\vert$.
Therefore, sending $y$ to $x$ in (\ref{eq:estA}) and then sending $T$ to infinity, we get that $A$ converges to zero
because $P(\tau_x<\infty)=1$ and bounded convergence. We can repeat the argument for $x\to y$ when using $P(\tau_y>T)$ in (\ref{eq:estA}).\\
\\
Now consider part $B$. We first observe that $B\leq 2M P(\tau_x>\tau_y)$.
Consequently, we need to show that $P(\tau_x>\tau_y)$ tends to zero if $y\to x$ or $x\to y$.
Again, fix $\omega\in\Omega$ for which $\tau_x(\omega)>\tau_y(\omega)$, this implies
that there is a claim amount $Y_n$, occuring at some point in time $T_n$, for which
\begin{align*}
X_{T_n-}^x(\omega)\geq Y_n(\omega)>X_{T_n-}^y(\omega),
\end{align*}
i.e., causing ruin for the path started in $y$, $(X_t^y)$, but not causing ruin for the one started in $x$, $(X_t^x)$.
From the construction of the drift $c(\cdot)$, it is decreasing to zero, we have that, surpressing the $\omega$ dependence,
\begin{align*}
0< Y_n-X_{T_n-}^y\leq X_{T_n-}^x-X_{T_n-}^y\leq x-y.
\end{align*}
Since $X_{T_n-}^y\in[0,a)$ we have
\begin{align*}
P(\tau_x>\tau_y)\leq \sup_{q\in[0,a)} P(0<Y-q\leq x-y)= \sup_{q\in[0,a)} \{F_Y(x-y+q)-F_Y(q)\},
\end{align*}
which approaches zero whenever $x$ and $y$ tend to each other since $F_Y$ is continuous.
\end{proof}
Define for functions $f\in\mathcal{C}_b([0,a))$ the operator $\mathcal{A}$ by
\begin{align}\label{eq:DefOperator}
\mathcal{A}f(x):=\mathbb{E}_x\left(e^{-\delta T_1}f(X_{T_1})\mathds{1}_{\{T_1<\tau_x\}} + e^{-\delta \tau_x}w(\vert X_{T_1}\vert,X_{T_1-})\mathds{1}_{\{\tau_x=T_1\}} \right).
\end{align}
The Markov property of the sequence $\{X_{T_i}\}_{i\in\Natural}$ and the definition of $V$ in (\ref{eq:discpen})
allow us to derive that $V=\mathcal{A}V$, or explicitely written
\begin{align*}
V(x)=\mathbb{E}_x\left[e^{-\delta T_1}V(X_{T_1})\mathds{1}_{\{T_1<\tau_x\}}+e^{-\delta T_1} w(\vert X_{T_1}\vert,X_{T_1-})\mathds{1}_{\{\tau_x=T_1\}}\right].
\end{align*}
We can state the following lemma.
\begin{lemma}
If $\delta>0$, the operator $\mathcal{A}:\mathcal{C}_b([0,a))\to \mathcal{C}_b([0,a))$ defined in (\ref{eq:DefOperator}) is a contraction with respect to
$\vert\vert\cdot\vert\vert_\infty$.
\end{lemma}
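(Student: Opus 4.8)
The plan is to establish two things: first, that $\mathcal{A}$ maps $\mathcal{C}_b([0,a))$ into itself, and second, that it contracts with modulus strictly less than $1$. For the self-map property, boundedness of $\mathcal{A}f$ is immediate: the first term is bounded by $\|f\|_\infty$ and the second by $M$ from the integrability condition \eqref{eq:intcond}, so $\|\mathcal{A}f\|_\infty\le\|f\|_\infty+M<\infty$. Continuity of $\mathcal{A}f$ in $x$ is proved by exactly the same path-coupling argument already carried out in the proof of the preceding theorem: one splits according to whether the two surpluses started in $x$ and $y$ experience ruin at the same first jump time $T_1$ or not, uses the Gr\"onwall-type bound $|X_t^x-X_t^y|\le e^{Lt}|x-y|$ valid pathwise up to $T_1$, together with continuity of $f$, $w$ and $F_Y$, and the fact that $P(T_1<\infty)=1$. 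I would simply remark that this repeats the earlier estimates verbatim, with $f$ playing the role that $V$ did and the horizon now being the single jump $T_1$ rather than $\tau_x$.

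For the contraction estimate, take $f,g\in\mathcal{C}_b([0,a))$. The penalty terms in $\mathcal{A}f(x)$ and $\mathcal{A}g(x)$ are identical (they do not involve $f$ or $g$), so they cancel in the difference, leaving
\begin{align*}
\bigl|\mathcal{A}f(x)-\mathcal{A}g(x)\bigr|
=\Bigl|\mathbb{E}_x\bigl[e^{-\delta T_1}\bigl(f(X_{T_1})-g(X_{T_1})\bigr)\mathds{1}_{\{T_1<\tau_x\}}\bigr]\Bigr|
\le\|f-g\|_\infty\,\mathbb{E}_x\bigl[e^{-\delta T_1}\bigr].
\end{align*}
Since $\delta>0$ and $T_1=W_1>0$ $P$-a.s. with a density $f_W$, the constant $q:=\sup_{x\in[0,a)}\mathbb{E}_x[e^{-\delta T_1}]=\int_0^\infty e^{-\delta t}f_W(t)\,dt$ satisfies $q<1$ (indeed $q<\int_0^\infty f_W(t)\,dt=1$ because $e^{-\delta t}<1$ for $t>0$ and $f_W$ is a genuine density on $(0,\infty)$); note this expectation does not actually depend on the starting point $x$ since $W_1$ is independent of everything else. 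Hence $\|\mathcal{A}f-\mathcal{A}g\|_\infty\le q\|f-g\|_\infty$ with $q<1$, which is the claim.

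The only genuine subtlety — and the step I would flag as the main obstacle — is the verification that $\mathcal{A}f$ is continuous, i.e. the self-map property rather than the contraction property, which is essentially free. The continuity argument hinges on controlling $P(\tau_x=T_1)$ versus $P(\tau_y=T_1)$ as $y\to x$, i.e. the event that the very first claim causes ruin for one path but not the other; as in the earlier theorem this is dominated by $\sup_{q\in[0,a)}\{F_Y(|x-y|+q)-F_Y(q)\}\to 0$ by continuity (one may upgrade to uniform continuity on the relevant compact range of $q$). I would therefore keep the write-up short: state the bound $\|\mathcal{A}f\|_\infty\le\|f\|_\infty+M$, refer to the proof of the previous theorem for the continuity of $\mathcal{A}f$ (noting the trivial modifications), and then give the three-line contraction computation above together with the observation $q<1$.
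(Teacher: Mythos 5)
Your proof is correct, and the contraction estimate itself is exactly the paper's: the penalty terms cancel, the difference is bounded by $\|f-g\|_\infty\,\mathbb{E}[e^{-\delta T_1}]$, and $\mathbb{E}[e^{-\delta T_1}]=\int_0^\infty e^{-\delta t}f_W(t)\,dt<1$ because $\delta>0$ and $T_1>0$ almost surely. The only place you genuinely diverge is the verification that $\mathcal{A}f$ is continuous. You re-run the pathwise coupling argument from the continuity proof of $V$ (splitting on whether the first claim ruins both paths or only one, the Gr\"onwall bound $|X_t^x-X_t^y|\le e^{Lt}|x-y|$, and the estimate $\sup_{q\in[0,a)}\{F_Y(|x-y|+q)-F_Y(q)\}\to 0$). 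The paper instead writes $\mathcal{A}f(x)$ as an explicit iterated integral
\[
\int_0^\infty e^{-\delta t_1}f_W(t_1)\Bigl[\int_0^{X_{t_1-}}f(X_{t_1-}-y_1)\,dF_Y(y_1)+\int_{X_{t_1-}}^\infty w(|X_{t_1-}-y_1|,X_{t_1-})\,dF_Y(y_1)\Bigr]dt_1,
\]
in which $x$ enters only through the ODE solution $X_{t_1-}$, and then invokes continuous dependence of that solution on its initial value. That route is shorter here because for a single application of the operator to a \emph{fixed} continuous bounded $f$ the continuity is essentially immediate from the representation, whereas your coupling argument is doing the harder work already needed for $V$ itself; on the other hand your argument is self-contained and makes the ``ruin at $T_1$ for one path but not the other'' event explicit, which the paper leaves implicit in the integral representation. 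Both are valid, and your identification of continuity (not contraction) as the only nontrivial step matches the structure of the paper's proof.
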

\begin{proof}
Let $f\in\mathcal{C}_b([0,a))$ be bounded by some constant $M'$, then 
\begin{align*}
\mathcal{A}f(x)=\mathbb{E}_x\left(e^{-\delta T_1}f(X_{T_1})\mathds{1}_{\{T_1<\tau_x\}}+ e^{-\delta \tau_x}w(\vert X_{T_1}\vert,X_{T_1-})\mathds{1}_{\{\tau_x=T_1\}} \right),
\end{align*}
is bounded by $\max\{M,M'\}$. From the integral representation of $\mathcal{A}f(x)$ we get continuity in $x$,
\begin{multline*}
\mathcal{A}f(x)=\\
\int_0^\infty e^{-\delta t_1}f_W(t_1)\left[\int_0^{X_{t_1-}}f(X_{t_1-}-y_1)dF_Y(y_1)+\int_{X_ {t_1-}}^\infty
w(\vert X_{t_1-}-y_1\vert,X_{t_1-})dF_Y(y_1)\right]dt_1,
\end{multline*}
where $X_{t_1-}$ is the ODE's solution up to time $t_1$ with $X_0=x$. From Stoer \& Bulirsch \cite[Th. 7.1.4]{StoerBulirsch2000} we have that $X_{t_1-}$ is continuous
in its initial value which shows that $\mathcal{A}f(x)$ is continuous in $x$.\\
Let $f,\,g\in \mathcal{C}_b([0,a))$, then
we have for all $x\in[0,a)$ that
\begin{align*}
\vert\mathcal{A}(f-g)(x)&\vert\leq \int_0^\infty e^{-\delta t_1}f_W(t_1)\int_0^{X_{t_1}}\vert f(X_{t_1}-y_1)-g(X_{t_1}-y_1)\vert dF_Y(y_1)dt_1\\
&\leq \vert\vert f-g\vert\vert_\infty\int_0^\infty e^{-\delta t_1}f_W(t_1)dt_1= \vert\vert f-g\vert\vert_\infty\mathbb{E}[e^{-\delta T_1}].
\end{align*}
Since $\delta>0$ and $T_1>0$ $P-$a.s., $\mathcal{A}$ is contractive with Lipschitz constant $\tilde{L}=\mathbb{E}[e^{-\delta T_1}]<1$.
\end{proof}
For a possible application of quasi-Monte Carlo techniques we need to examine
the structure of $\mathcal{A}$,
\begin{align*}
\mathcal{A}v(x)=&\int_0^\infty e^{-\delta t_1}f_W(t_1)\int_0^{X_{t_1-}}v(X_{t_1-}-y_1)dF_Y(y_1)dt_1+\\
&\int_0^\infty e^{-\delta t_1}f_W(t_1)\int_{X_{t_1-}}^\infty w(y_1-X_{t_1-},X_{t_1-})dF_Y(y_1)dt_1\\
=:&\;\mathcal{G}v(x)+\mathcal{H}(x).
\end{align*}
For $n\in\Natural$ the probabilistic interpretation of iterated applications of $\mathcal{A}$ is
$\mathcal{A}^n v(x)=\mathbb{E}_x\left(e^{-\delta T_n}v(X_{T_n})\mathds{1}_{\{T_n<\tau_x\}}+e^{-\delta \tau_x}w(\vert X_{\tau_x}\vert,X_{\tau_x-})\mathds{1}_{\{\tau_x\leq T_n\}}\right)$.
Using $\mathcal{G}$ and $\mathcal{H}$ we can write
\begin{align*}
\mathcal{A}^n v(x)= \mathcal{G}^n v(x)+\sum_{k=0}^{n-1}\mathcal{G}^k \mathcal{H}(x),
\end{align*}
where $\mathcal{G}^n v(x)=\mathbb{E}_x(e^{-\delta T_n}v(X_{T_n})\mathds{1}_{\{T_n<\tau\}})$ and
\begin{align*}
\mathcal{G}^{k-1}\mathcal{H}(x)=&\int_0^\infty\cdots\int_0^\infty\int_{X_{\bar{t}_k-}}^\infty\int_0^{X_{\bar{t}_{k-1}-}}\cdots\int_0^{X_{\bar{t}_1-}}\\
&\left(\prod_{i=1}^k e^{-\delta t_i}\,f_W(t_i)\right) w(y_k-X_{\bar{t}_k-},X_{\bar{t}_k-})dF_Y(y_k)\cdots dF_Y(y_1)dt_k\cdots dt_1.
\end{align*}
Here, $\bar{t}:=\sum_{i=1}^k t_i$ and represents the time of the $k$-th jump. We see that via $X_{\bar{t}_k}=X_{\bar{t}_{k-1}}-y_{k-1}+\int_{\bar{t}_{k-1}}^{\bar{t}_k}c(X_s)ds$ the path of the process depends on all integration variables
$(t_1,\ldots,t_k,y_1,\ldots,y_k)$.\\
For dealing with the situation $\delta=0$, i.e., when the contraction argument fails, we can use a probabilistic argument.
Since $\lim_{n\to\infty}T_n=\infty$ and $P(\tau_x<\infty)=1$ we have that
$\lim_{n\to\infty}\mathcal{G}^n v(x)=\mathbb{E}_x\left(e^{-\delta T_n}v(X_{T_n})\mathds{1}_{\{T_n<\tau\}}\right)=0$ for $v\in\mathcal{C}_b([0,a))$.
Using $\vert \mathcal{A}^nv(x)-V(x)\vert=\vert\,\mathcal{G}^nv(x)-\mathcal{G}^nV(x)\,\vert$ we get
$\lim_{n\to\infty}\mathcal{A}^n v(x)=V(x)$ pointwise, even in the case if $\delta=0$.\\
In what follows we put the focus on the determination of $\mathcal{G}^k\mathcal{H}(x)$.

\section{Approximation procedure}\label{approximation_section}
For the application of QMC methods we need to transform in a first step the integration domain in
\begin{align*}
\mathcal{G}^{k-1}\mathcal{H}(x)=&\int_0^\infty\cdots\int_0^\infty\int_{X_{\bar{t}_k-}}^\infty\int_0^{X_{\bar{t}_{k-1}-}}\cdots\int_0^{X_{\bar{t}_1-}}\\
&\left(\prod_{i=1}^k e^{-\delta t_i}\,f_W(t_i)\right) w(y_k-X_{\bar{t}_k-},X_{\bar{t}_k-})dF_Y(y_k)\cdots dF_Y(y_1)dt_k\cdots dt_1
\end{align*}
to $[0,1]^{2k}$. This is achieved by use of the following substitutions
\begin{align*}
 \alpha_i&:=e^{-t_i}\Rightarrow t_i=-\log\alpha_i\qquad\text{for }i\in\{1,\dots,k\}\\
 \beta_i&:=\frac{y_i}{X_{\bar{t}_i-}}\Rightarrow y_i= X_{\bar{t}_i-}\beta_i\qquad\text{for }i\in\{1,\dots,k-1\}\\
 \beta_k&:=e^{X_{\bar{t}_k-}}e^{-y_k}\Rightarrow y_k=X_{\bar{t}_k-}-\log\beta_k.
\end{align*}
Here it has to be taken into account that the values of the reserve process $X$ have to be calculated recursively, i.e., $X_{\bar{t}_i-}$ depends
on $t_1,\ldots,t_i$ and $y_1,\ldots,y_{i-1}$. Since the Jacobian matrix of this transformation has a lower triangular form, the determinant
can easily be found as $\frac{1}{\alpha_1\dots\alpha_k}X_{\bar{t}_1-}\cdots X_{\bar{t}_{k-1}-}\frac{1}{\beta_{k}}$. Alltogether, we arrive at
\begin{align*}
\mathcal{G}^{k-1}\mathcal{H}(x)=&\int_{[0,1]^{2k}}\prod_{i=1}^k\alpha_i^\delta\,f_W(t_i(\alpha_i))
\prod_{i=1}^kf_Y(y_i(\alpha_1,\dots,\alpha_i,\beta_1,\dots,\beta_i))\\
&\frac{1}{\alpha_1\dots\alpha_k}\,X_{\bar{t}_1-}\cdots X_{\bar{t}_{k-1}-}
\,\frac{1}{\beta_k}\,d\alpha_1\dots d\alpha_kd\beta_1\dots d\beta_k.
\end{align*}
Consequently, for recovering the Koksma-Hlawka type errorbound we need to examine the variation of the integrand:
\begin{align}\label{eq:Integrand}
F(\alpha_1,\ldots,\alpha_k,\beta_1,\ldots,\beta_k)=&
\left(\prod_{i=1}^{k-1}\alpha_i^{\delta-1}f_W(-\log(\alpha_i)\right)\left(\prod_{i=1}^{k-1}f_Y(\beta_i X_{\bar{t}_i-})X_{\bar{t}_i-}\right)\nonumber\\
&\left(\alpha_k^{\delta-1}f_W(-\log(\alpha_k))f_Y(X_{\bar{t}_k-}-\log(\beta_k))\frac{1}{\beta_k}w(-\log\beta_k,X_{\bar{t}_k-}\right).
\end{align}
Here we denote by $\phi(t,s)$ the solution to $\frac{\partial}{\partial t}x(t)=c(x(t))$ with $x(0)=s$. Consequently, we can write 
\begin{align*}
X_{\bar{t}_i-}=X_{\bar{t}_{i-1}-}-y_{i-1}+\phi(t_i,X_{\bar{t}_{i-1}-}-y_{i-1}).
\end{align*}
Or in terms of $\alpha_i$, putting $\hat{x}_{i-1}=X_{\bar{t}_{i-1}-}-y_{i-1}=X_{\bar{t}_{i-1}-}(1-\beta_{i-1})$ and
\begin{align}\label{eq:Path}
X_{\bar{t}_i-}=\hat{x}_{i-1}+\phi(-\log(\alpha_i),\hat{x}_{i-1}).
\end{align}
In the following proposition we show that with a particular choice of model parameters it is possible to apply results from \cite{PausingerSvane2015}
to show that the integrand in (\ref{eq:Integrand}) is in some sense of finite variation.
Its proof shows that probabilistic and deterministic model ingredients are considerably interconnected.
\begin{theorem}\label{finite_variation}
Let $f_W(t)=\lambda e^{-\lambda t}\mathds{1}_{\{t\geq 0\}}\;(\lambda>0)$, $f_Y(y)=\mu e^{-\mu y}\mathds{1}_{\{y\geq 0\}}\;(\mu>0)$, $w\equiv 1$
and $c(\cdot)$ be specified by (\ref{eq:polynomialchoice}).
Then, under the assumption $\lambda+\delta\geq 3$ and $\mu\geq 3$ the variation $\mathcal{V}_\mathcal{K}(F)$ (see \eqref{variation} with $\mathcal{D}=\mathcal{K}$) of $F$, defined in 
(\ref{eq:Integrand}), is finite.
\end{theorem}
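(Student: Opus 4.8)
The strategy is to reduce the assertion to the regularity result of Pausinger \& Svane \cite{PausingerSvane2015}: a twice continuously differentiable function on a closed cube has finite $\mathcal{K}$-variation (with an explicit bound). So it is enough to prove that, under the stated hypotheses, the integrand $F$ of \eqref{eq:Integrand} extends to a function of class $\mathcal{C}^{2}$ on $[0,1]^{2k}$.

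First I would substitute the exponential densities and $w\equiv 1$ into \eqref{eq:Integrand}. Using $f_{W}(-\log\alpha)=\lambda\alpha^{\lambda}$, $f_{Y}(\beta_{i}X_{\bar{t}_{i}-})=\mu e^{-\mu\beta_{i}X_{\bar{t}_{i}-}}$ and $f_{Y}(X_{\bar{t}_{k}-}-\log\beta_{k})\,\beta_{k}^{-1}=\mu e^{-\mu X_{\bar{t}_{k}-}}\beta_{k}^{\mu-1}$, one gets the factorisation
\[
F=\lambda^{k}\mu^{k}\,\Big(\prod_{i=1}^{k-1}\alpha_{i}^{\lambda+\delta-1}\,X_{\bar{t}_{i}-}\,e^{-\mu\beta_{i}X_{\bar{t}_{i}-}}\Big)\,\alpha_{k}^{\lambda+\delta-1}\,\beta_{k}^{\mu-1}\,e^{-\mu X_{\bar{t}_{k}-}},
\]
a product of the monomials $\alpha_{i}^{\lambda+\delta-1}$ $(i=1,\dots,k)$ and $\beta_{k}^{\mu-1}$ with ``path factors'' built from the reserve values $X_{\bar{t}_{i}-}$ through \eqref{eq:Path}. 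Since $\lambda+\delta\ge 3$ and $\mu\ge 3$, every one of these monomials has exponent at least $2$ and is therefore itself $\mathcal{C}^{2}$ across the faces $\{\alpha_{i}=0\}$, resp.\ $\{\beta_{k}=0\}$. All the difficulty is thus concentrated in the path factors and, in particular, in their behaviour as one of the $\alpha_{j}$ tends to $0$, i.e.\ as the inter-jump time $t_{j}=-\log\alpha_{j}$ runs off to infinity.

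The heart of the proof is therefore an analysis of the flow $\phi(t,s)$ of $\dot x=c(x)$ and of the recursion \eqref{eq:Path}. Because $c\ge 0$, $c(a)=0$ and $c$ is Lipschitz, a trajectory started in $[0,a)$ stays in $[0,a)$ and increases monotonically towards the equilibrium $a$, so the $X_{\bar{t}_{i}-}$ remain bounded; since the choice \eqref{eq:polynomialchoice} makes $c$ vanish to exactly third order at $a$, the barrier is \emph{algebraically} attracting, which gives that $c(\phi(t,s))$ and $c'(\phi(t,s))$ tend to $0$ as $t\to\infty$ uniformly in $s$, while the first and second derivatives $\partial_{s}\phi$, $\partial_{s}^{2}\phi$ stay bounded uniformly in $(t,s)$ (for this last point the cubic rate, not merely $c'\le 0$, is essential). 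Feeding this into \eqref{eq:Path} — with $t_{i}=-\log\alpha_{i}$, so $t_{i}\to\infty$ corresponds to $\alpha_{i}\to 0^{+}$ — one verifies that each $X_{\bar{t}_{i}-}$ is $\mathcal{C}^{2}$ on the open cube, has bounded partial derivatives up to order $2$ in the $\beta$-variables, and has $\alpha_{j}$-derivatives of order $m\le 2$ of the form $\alpha_{j}^{-m}$ times a finite sum of products of $c(\phi(-\log\alpha_{j},\cdot))$, $c'(\phi(-\log\alpha_{j},\cdot))$ and bounded factors — a quantity that is $O(\alpha_{j}^{-m})$ but carries a prefactor tending to $0$ as $\alpha_{j}\to 0^{+}$ (because $c(a)=0$).

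It remains to put the two ingredients together. Fixing $j$ and writing $F=\alpha_{j}^{\lambda+\delta-1}h_{j}$, the factor $h_{j}$ is smooth on the open cube and satisfies $\partial_{\alpha_{j}}^{m}h_{j}=O(\alpha_{j}^{-m})$ for $m\le 2$, with a prefactor that vanishes in the limit $\alpha_{j}\to 0^{+}$ as soon as $m\ge 1$; since $\lambda+\delta-1\ge 2\ge m$, Leibniz' rule shows that each of the resulting terms $\alpha_{j}^{\lambda+\delta-1-m}\,\partial_{\alpha_{j}}^{m}h_{j}$ is bounded on the cube and extends continuously to the face $\{\alpha_{j}=0\}$. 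The same computation — now using $\mu-1\ge 2$ for the $\beta_{k}$-monomial — disposes of all first and second partial derivatives of $F$, mixed ones included, so $F$ indeed extends to a $\mathcal{C}^{2}$ function on $[0,1]^{2k}$, and the Pausinger–Svane bound yields $\mathcal{V}_{\mathcal{K}}(F)<\infty$. The main obstacle is the flow analysis of the third paragraph: quantifying how fast the deterministic drift drives the surplus into the barrier $a$ and checking that the ensuing $\alpha_{j}$-singularities of the path factors are \emph{exactly} absorbed by the weights $\alpha_{j}^{\lambda+\delta-1}$. This is precisely where the hypotheses $\lambda+\delta\ge 3$ and $\mu\ge 3$ enter, and where, as the authors point out before the statement, the probabilistic parameters $(\lambda,\delta,\mu)$ and the deterministic drift \eqref{eq:polynomialchoice} become genuinely intertwined.
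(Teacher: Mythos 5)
Your proposal is correct and follows essentially the same route as the paper: reduce to the Pausinger--Svane bound for twice continuously differentiable functions, observe that the exponential densities turn the weights into $\alpha_i^{\lambda+\delta-1}$ and $\beta_k^{\mu-1}$, and check that these powers (together with the boundedness of $\partial_s\phi$ and $\partial_s^2\phi$, guaranteed by $c'\le 0$ and the smooth vanishing of $c$ at $a$) absorb the $\alpha_j^{-m}$ and $\beta_k^{-m}$ singularities produced by differentiating the logarithmic substitutions, which is exactly where $\lambda+\delta\ge 3$ and $\mu\ge 3$ enter. The only cosmetic difference is that the paper bounds the Hessian entries directly via $\mathcal{V}_{\mathcal{K}}(F)\le \sup F-\inf F+M(F)$ rather than phrasing the conclusion as a $\mathcal{C}^2$ extension to the closed cube.
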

\begin{proof}
The main idea of the proof is the application of \cite[Th. 3.12]{PausingerSvane2015}.
For this purpose we need to show that $M(F)=\sup\{\|\mbox{Hess}(F,x)\|\,\vert\,x\in[0,1]^{2 k}\}$, $\sup F$ and $\inf F$ are finite, with the implication
\begin{align*}
\mathcal{V}_\mathcal{K}(F)\leq \sup F-\inf F+M(F).
\end{align*}
Since in this theorem the operator (matrix) norm $\|\mbox{Hess}(F,x)\|$ is arbitrary we use the 2-norm and exploit the relation
\begin{align*}
\|\mbox{Hess}(F,x)\|_2\leq\left(\sum_{i=1}^{2k}\sum_{j=1}^{2k}[\mbox{Hess}(F,x)]_{ij}^2 \right)^{\frac 1 2}.
\end{align*}
We will show that $[\mbox{Hess}(F,x)]_{ij}$ is finite for all $x\in[0,1]^{2k}$. At first we observe that when taking derivatives
with respect to $\alpha_i$ and $\beta_j$, the structure of (\ref{eq:Path}) implies the appearance of the following terms:
\begin{align*}
&\frac{\partial}{\partial t}\phi(t,s)=c(\phi(t,s)),\; \frac{\partial^2}{\partial t^2}\phi(t,s)=c'(\phi(t,s))c(\phi(t,s)),\\
&\frac{\partial}{\partial s}\phi(t,s)=:y(t,s)=e^{\int_0^t c'(\phi(u,s))du},\;\frac{\partial^2}{\partial t\partial s}\phi(t,s)=c'(\phi(t,s))y(t,s),\\
&\frac{\partial^2}{\partial s^2}\phi(t,s)=:z(t,s)=y(t,s)\int_0^t c''(\phi(u,s))y(u,s)du.
\end{align*}
The functions $y,\,z$ correspond to the first and second derivative of the ODE's solution with respect to the initial value. They can be derived from the 
associated first and second order variational equations (see \cite{LectureODE}).
From our assumptions on $c(\cdot)$ we have that
$y$ is bounded by one ($c'\leq 0$) and all other derivatives including $z$ are bounded as well. The boundedness of $z$ can be derived from the
boundedness of $c''(\cdot)$ and an analysis of the growth behaviour of $y$.\\
For the structure of $[\mbox{Hess}(F,x)]_{ij}$ we can derive the following
\begin{align*}
\prod_{i=1}^k \alpha_i^{\delta+\lambda-a}\,\beta_k^{\mu-b}\,e^{-\mu(y_1+\cdots+y_{k-1}+X_{\bar{t}_k-})}\,
Q\left(\beta_1,\ldots,\beta_{k-1},\phi,\frac{\partial}{\partial t}\phi,\frac{\partial^2}{\partial t^2}\phi,
\frac{\partial}{\partial s}\phi,\frac{\partial^2}{\partial t\partial s}\phi,\frac{\partial^2}{\partial s^2}\phi\right),
\end{align*}
where $a,\,b\in\{1,2,3\}$ and a function $Q$. $Q$ is evaluated at the integration points and $\phi$ and its derivatives which themselves are evaluated
in points of the form $(-\log(\alpha_i),\hat{x}_{i-1})\in(0,\infty)\times[0,a)$ for $i\in\{1,\ldots,k\}$.
If $\phi$ and its derivatives are considered to be variables, neglecting their dependence on $\alpha_i$s and $\beta_i$s, then $Q$ is a polynomial of degree $k$.
The degree of the polynomial is produced by the recursive structure of the paths and its dependence on all previous jump times and sizes.
From this inspection we get that under the conditions $\lambda+\delta\geq 3$ and $\mu\geq 3$ all entries of the Hessian matrix are bounded. Furthermore,
the conditions on the parameters $\lambda,\,\delta,\,\mu$ ensure that $\sup F$ is finite and $\inf F=0$.
\end{proof}
%
\begin{remark}
We can combine the above result with the convergence rate from Banach's fixed point theorem and obtain for our specific situation
\begin{align*}
\left\| \sum_{k=0}^n\hat{\mathcal{G}}^k\mathcal{H}-V\right\|_\infty\leq &\left\| \sum_{k=0}^n(\hat{\mathcal{G}^k}\mathcal{H}-\mathcal{G}^k\mathcal{H})\right\|_\infty+\left\|\mathcal{A}^n-V\right\|_\infty
+\left\|\mathcal{G}^n v\right\|_\infty\\
&\leq \sum_{k=0}^n \mathcal{V}_\mathcal{K}(F^k)\tilde{D}_{N_k}+\frac{\tilde{L}^n}{1-\tilde{L}}\|\mathcal{A}v-v\|_{\infty}+M'\left(\frac{\lambda}{\delta+\lambda}\right)^n.
\end{align*}
Here $F^k$ denotes the integrand from (\ref{eq:Integrand}) in dimension $2k$, $\tilde{D}_{N_k}$ the
isotropic discrepancy of a pointset with $N_k$ elements in $[0,1]^{2k}$ and $\hat{\mathcal{G}}^k\mathcal{H}$ is the QMC approximation for $\mathcal{G}^k\mathcal{H}$. For the last term we used that $v$ is bounded by some $M'>0$ and the fact the $T_n$ follows a Gamma distribution $\Gamma(n,\lambda)$.\\
From the type of arguments we used for the proof of Theorem \ref{finite_variation}, we expect that the result holds true for $\Gamma$-distributed inter-claim times and jump heights and $w(y,z)=y^kz^l$ with similar conditions on the parameters. Hence the method is also applicable for this more general situation. A detailed study of this claim is part of future research.
\end{remark}

\section{Numerical results}
  In this section, we evaluate the integrals from Section \ref{approximation_section} by applying Monte Carlo and quasi-Monte Carlo methods for different choices of the penalty function $w$.
  \subsection{The discounted time of ruin}
  Letting $w(y,z):=1$, we arrive at $V(x)=\mathbb{E}(e^{-\delta\tau} w(|X_\tau|,X_{\tau-}))=\mathbb{E}(e^{-\delta\tau})$ which is the discounted time of ruin. Lin et al. \cite{lin2003classical} found an analytic expression for this discounted time of ruin if both the inter-arrival times of the claims and the claim sizes are exponentially distributed. To have a reference value, we also adopt these assumptions and denote the parameters of the exponential distributions with $\lambda$ for the parameter of the inter-arrival times and $\mu$ for the parameter of the claim sizes. The premium rate $c(\cdot)$ was chosen as in Section \ref{markov_risk} with $f$ from equation \eqref{eq:polynomialchoice}, with $\tilde{c}=2$, $a=3$ and $\varepsilon$ was set to $0.001$. Note that the results of Lin et al. \cite{lin2003classical} were proved for a reflected process in the classical sense, which means $c(x)=\tilde{c}$ for $x\leq a$ and $c(x)=0$ for $x>a$. Since Theorem \ref{finite_variation} requires a premium rate satisfying certain smoothness conditions, we cannot use a discontinuous $c$ and thus have a methodic error in our simulations. However, we will see that this error is, at least for small $\varepsilon$, very small.\\
  $ $\\
  We list the parameters together with the approximation values for increasing numbers of (Q)MC points and $k=20$ iterations of the algorithm in Table \ref{MC_w1_20_table}, whereas Table \ref{MC_w1_100_table} shows the approximation values for $k=100$ iterations of the algorithm. Figure \ref{MC_w1_20} and Figure \ref{MC_w1_100} show the MC points (green) with $95\%$ confidence intervals, together with QMC points from Sobol sequences (blue) and Halton sequences (orange).
  \begin{figure}[htbp]
	\begin{minipage}{0.5\textwidth} 
	\includegraphics[width=\textwidth]{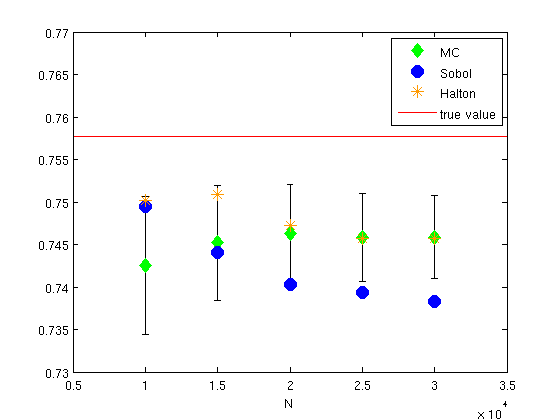}
	\caption{\footnotesize$k=20$ iterations of the algorithm.\normalsize\label{MC_w1_20}}
	\end{minipage}
	\hfill
	\begin{minipage}{0.5\textwidth}
	\includegraphics[width=\textwidth]{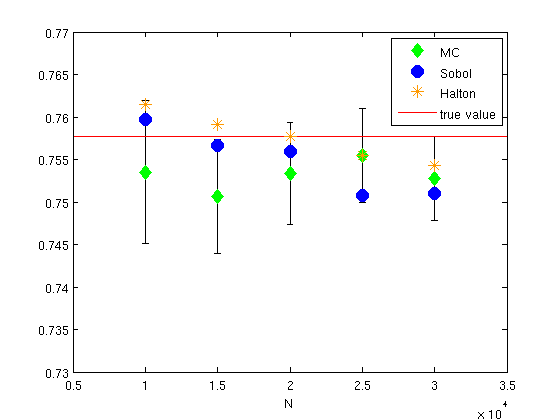}
	\caption{\footnotesize$k=100$ iterations of the algorithm.\normalsize\label{MC_w1_100}} 
	\end{minipage}
	\vskip 1cm
	\begin{minipage}{0.45\textwidth}
	\begin{adjustbox}{width=\linewidth}
	 \renewcommand{\arraystretch}{1.2}
	\begin{tabular}{rrrrrr}
	  $x$ & $\lambda$ & $\mu$ & $w(y,z)$ & $k$ & $\delta$ \\
	  $1.2$ & $1$ & $0.8$ & $1$ & $20$ & $0.05$ \\
	  \hline $N$: & $10000$ & $15000$ & $20000$ & $25000$ & $30000$ \\
	  MC: & $0.7425$ & $0.7452$ & $0.7463$ & $0.7458$ & $0.7459$\\
	  Sobol: & $0.7494 $ & $0.7440 $ & $0.7403 $ & $0.7394 $ & $0.7383$ \\
	  Halton: & $0.7502$ & $ 0.7509$ & $0.7473$ & $0.7488$ & $0.7457$
	\end{tabular}
	\end{adjustbox}\captionof{table}{\label{MC_w1_20_table}}
	\end{minipage}
	\hfill
	\begin{minipage}{0.45\textwidth}
	\begin{adjustbox}{width=\linewidth}
	\renewcommand{\arraystretch}{1.2}
	\begin{tabular}{rrrrrr}
	  $x$ & $\lambda$ & $\mu$ & $w(y,z)$ & $k$ & $\delta$ \\
	  $1.2$ & $1$ & $0.8$ & $1$ & $100$ & $0.05$ \\
	  \hline $N$: & $10000$ & $15000$ & $20000$ & $25000$ & $30000$ \\
	  MC: & $0.7535$ & $0.7507$ & $0.7534$ & $0.7555$ & $0.7527$\\
	  Sobol: & $0.7597$ & $0.7566$ & $0.7560$ & $0.7508$ & $0.7510$\\
	  Halton: & $0.7615$ & $0.7591$ & $0.7577$ & $0.7555$ & $0.7543$
	\end{tabular}
	\end{adjustbox}\captionof{table}{\label{MC_w1_100_table}}
	\end{minipage}
\end{figure}
$ $\\
The red line at height $0.7577$ marks the true value. As can be seen in Figure \ref{MC_w1_20}, the algorithm has not yet converged for $k=20$, whereas Figure \ref{MC_w1_100} shows that $k=100$ already yields a very good approximation.\\
$ $\\
To illustrate the speed of convergence, we also plotted the absolute error, both for the MC approach as well as for QMC points (again taken from Sobol and Halton sequences) for varying numbers of points $N$. Figures \ref{Diff_40} and \ref{Diff_40_bad} show the values obtained for $k=40$ iterations of the algorithm. Obviously, $k=40$ is also not yet enough to reach the actual value. But notice that the absolute error even for more iterations cannot converge to zero because of the smoothed reflection procedure.
\begin{figure}[h!]
	\begin{minipage}{0.5\textwidth} 
	\includegraphics[width=\textwidth]{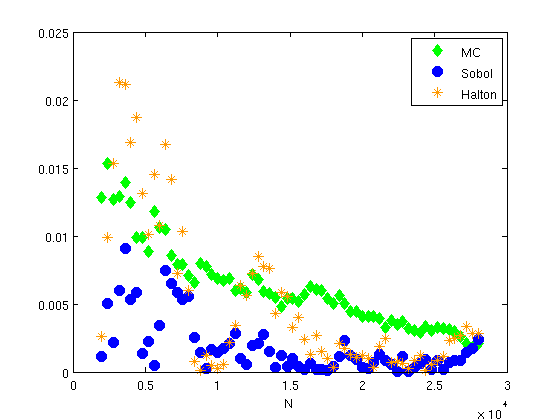}
	\caption{\footnotesize``lucky'' choice of QMC points.}
	\label{Diff_40}
	\end{minipage}
	\hfill
	\begin{minipage}{0.5\textwidth}
	\includegraphics[width=\textwidth]{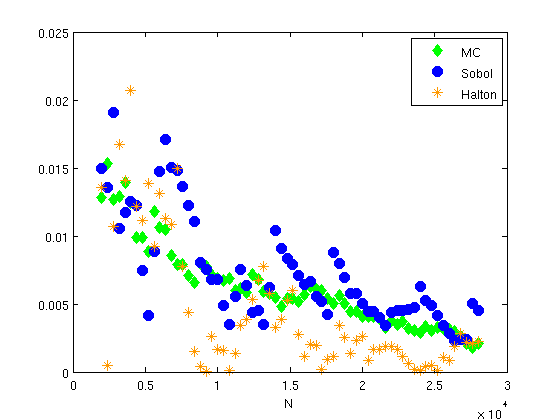}
	\caption{\footnotesize``unlucky'' choice of QMC points.\label{Diff_40_bad}} 
	\end{minipage}
\end{figure}
 For both of the QMC methods, a scramble improved the results. In the Sobol case however, an ``unlucky'' choice in the scramble and the skip value (i.e. how many elements are dropped in the beginning) can lead to relatively high variation in the output, whereas the Halton set shows a more stable performance (compare Figures \ref{Diff_40} and \ref{Diff_40_bad}).\\
 $ $\\
 \begin{figure}[htbp]
	\begin{minipage}{0.5\textwidth} 
	\includegraphics[width=\textwidth]{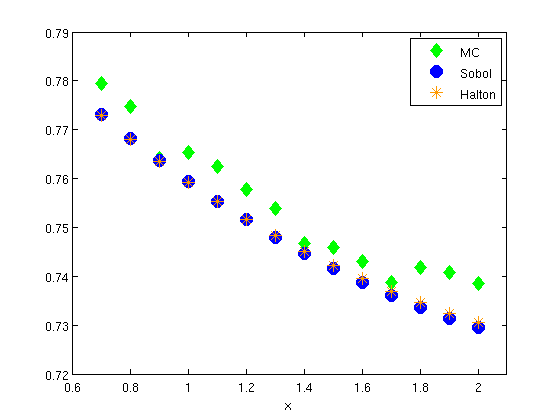}
	\caption{Influence of the starting value}
	\label{vary_x0}
	\end{minipage}
	\hfill
	\begin{minipage}{0.5\textwidth}
	For Figure \ref{vary_x0} we evaluated $k=40$ iterations of the algorithm with $N=30000$ (Q)MC points for different starting values $x$, ranging from $0.7$ to $2$. As expected, the discounted time of ruin decreases for increasing $x$.
	\end{minipage}
\end{figure}
\subsection{The deficit at ruin}
If we set $w(y,z):=y$, and $\delta=0$, we have $V(x)=\mathbb{E}(|X_{\tau-}|)$, the expected deficit at ruin. We use the same premium rate $c(\cdot)$ as before and again choose exponential distributions for the inter-arrival times and claim sizes with parameters $\lambda$ and $\mu$ respectively, since also in this case the true value $\mathbb{E}(|X_{\tau-}|)=\frac1\mu$ (for a classically reflected process) can be found in \cite{lin2003classical}. Figures \ref{DaR_20} and \ref{DaR_100} show the results for $k=20$ and $k=100$ iterations respectively. The reference value is again shown as a red line, in our case at $1.25$. The MC points are drawn in green, the Sobol points blue and the Halton points in orange. Table \ref{w2_20_table} and Table \ref{w2_100_table} contain the precise values along with the corresponding parameters.\\
$ $\\
Note again the difference between Figure \ref{DaR_20} and Figure \ref{DaR_100}, resulting from a different number of iterations $k$.
\begin{figure}[htbp]
	\begin{minipage}{0.5\textwidth} 
	\includegraphics[width=\textwidth]{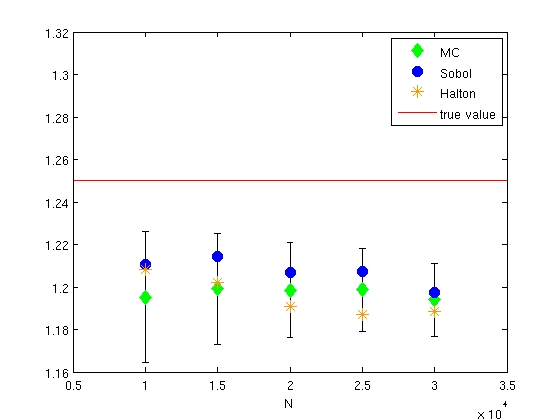}
	\caption{\footnotesize$k=20$ iterations of the algorithm.\normalsize\label{DaR_20}}
	\end{minipage}
	\hfill
	\begin{minipage}{0.5\textwidth}
	\includegraphics[width=\textwidth]{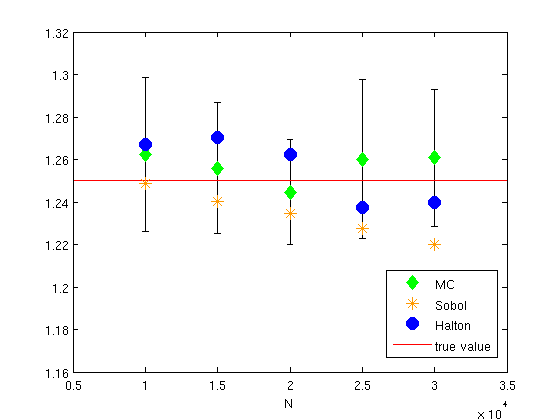}
	\caption{\footnotesize$k=100$ iterations of the algorithm.\normalsize\label{DaR_100}}
	\end{minipage}
	\vskip 1cm
	\begin{minipage}{0.45\textwidth}
	\begin{adjustbox}{width=\linewidth}
	 \renewcommand{\arraystretch}{1.2}
	\begin{tabular}{rrrrrr}
	  $x$ & $\lambda$ & $\mu$ & $w(y,z)$ & $k$ & $\delta$ \\
	  $1.2$ & $1$ & $0.8$ & $y$ & $20$ & $0$ \\
	  \hline $N$: & $10000$ & $15000$ & $20000$ & $25000$ & $30000$ \\
	  MC: & $1.1952$ & $1.1991$ & $1.1986$ & $1.1987$ & $1.1939$\\
	  Sobol: & $1.2105$ & $1.2142$ & $1.2070$ & $1.2074$ & $1.1975$ \\
	  Halton: & $1.2084$ & $1.2019$ & $1.1906$ & $1.1872$ & $1.1885$
	\end{tabular}}
	\end{adjustbox}\captionof{table}{\label{w2_20_table}}
	\end{minipage}
	\hfill
	\begin{minipage}{0.45\textwidth}
	\begin{adjustbox}{width=\linewidth}
	\renewcommand{\arraystretch}{1.2}
	\begin{tabular}{rrrrrr}
	  $x$ & $\lambda$ & $\mu$ & $w(y,z)$ & $k$ & $\delta$ \\
	  $1.2$ & $1$ & $0.8$ & $y$ & $100$ & $0$ \\
	  \hline $N$: & $10000$ & $15000$ & $20000$ & $25000$ & $30000$\\
	  MC: & $1.2624$ & $1.2558$ & $1.2446$ & $1.2602$ & $1.2607$\\
	  Sobol: & $1.2669$ & $1.2704$ & $1.2624$ & $1.2373$ & $1.2398$\\
	  Halton: & $1.2487$ & $1.2404$ & $1.2344$ & $1.2273$ & $1.2199$
	\end{tabular}
	\end{adjustbox}\captionof{table}{\label{w2_100_table}}
	\end{minipage}
\end{figure}
\begin{figure}[htbp]
	\begin{minipage}{0.5\textwidth} 
	Again, we plotted the absolute error for $k=40$ iterations of the algorithm and a varying number of (Q)MC points $N$. Figure \ref{Diff_DaR} shows the results using the same colorings as before.
	\end{minipage}
	\hfill
	\begin{minipage}{0.5\textwidth}
	\includegraphics[width=\textwidth]{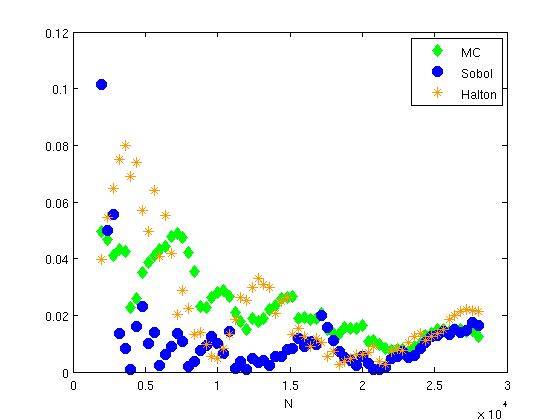}
	\caption{The absolute error for the deficit at ruin}
	\label{Diff_DaR} 
	\end{minipage}
\end{figure}

\begin{remark}
We considered in our numerical examples two test cases for which explicit (approximate) reference values are available. Certainly our approach is not restricted to this particular choice of model ingredients - which are $f_Y$, $f_W$ and the penalty function $w$.
\end{remark}


%

{\footnotesize
\bibliographystyle{abbrv}
}
\Addresses

\end{document}